\documentclass{amsart}

\usepackage{amsmath,amssymb}

\title
[An obstruction for codimension two contact embeddings]
{An obstruction for the existence of codimension two contact embeddings in a Darboux chart}

\author{Naohiko Kasuya}
\address{Graduate School of Mathematical Sciences, University of Tokyo, 3-8-1 Komaba, Meguro-ku, Tokyo 153-8914, Japan.}
\email{nkasuya@ms.u-tokyo.ac.jp}
\date{April 24, 2013.}
\keywords{Contact submanifolds, First Chern class.}
\subjclass[2010]{57R17}

\newcommand{\Real}{\mathbb{R}}
\newcommand{\Complex}{\mathbb{C}}
\newcommand{\Integer}{\mathbb{Z}}

\theoremstyle{plain}
\newtheorem{theorem}{Theorem}[section]
\newtheorem{prop}[theorem]{Proposition}

\newtheorem{corollary}[theorem]{Corollary}
\theoremstyle{definition}
\newtheorem{definition}[theorem]{Definition}

\newtheorem{remark}[theorem]{Remark}

\begin{document}

\begin{abstract}
We prove the vanishing of the first Chern class of a codimension $2$ closed contact submanifold of  
a cooriented contact manifold with trivial integral $2$-dimensional cohomology group. 
Hence the first Chern class is an obstruction for the existence of codimension $2$ contact embeddings in a Darboux chart.
For the existence of such an embedding, we prove that a closed cooriented contact $3$-manifold  
can be a contact submanifold of $\Real^5$ for a certain contact structure, if its first Chern class vanishes. 
\end{abstract}

\maketitle

\section{Introduction}
In this paper, we study codimension $2$ contact embeddings in the odd dimensional Euclidean space. 
Let $(M^{2n-1},\xi)$ be a closed contact manifold and $(N^{2m-1},\eta )$ be a cooriented contact manifold. An embedding $f:M^{2n-1}\to N^{2m-1}$ is said to be a contact embedding if $f_{\ast}(TM^{2n-1})\cap \eta |_{f(M^{2n-1})}=f_{\ast}\xi $. 
Note that $\xi $ must be coorientable since $f^{\ast }\beta $ is a global defining $1$-form of $\xi$, where $\beta $ is a global defining $1$-form of $\eta $.
For given $(M^{2n-1},\xi)$, we would like to know whether there exists a contact embedding of $(M^{2n-1},\xi)$ in $(\Real^{2n+1},\eta _{0})$, 
where $\eta _{0}$ is the standard contact structure on $\Real^{2n+1}$. 
It is equivalent to the existence of contact embeddings of $(M^{2n-1},\xi)$ in the $(2n+1)$-sphere with the standard contact structure.
We see that the first Chern class is an obstruction for the existence of such an embedding.

\begin{theorem}~\label{obstruction}
If a closed contact manifold $(M^{2n-1},\xi)$ is a contact submanifold of 
a cooriented contact manifold $(N^{2n+1},\eta )$ such that $H^2(N^{2n+1};\Integer )=0$, 
then the first Chern class $c_1(\xi)$ vanishes.
\end{theorem}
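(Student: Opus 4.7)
The plan is to derive the vanishing of $c_{1}(\xi)$ from two complementary uses of the hypothesis $H^{2}(N;\Integer)=0$: one coming from the splitting of the ambient contact bundle along $M$, the other from the Thom/Euler class of the normal bundle $\nu$ of $M$ in $N$.

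The first step is to identify $\eta|_{M}$ with $\xi\oplus\nu$ as a complex vector bundle. Choose a contact form $\alpha$ with $\ker\alpha=\eta$. Then $d\alpha$ is symplectic on $\eta$, and $\xi=TM\cap\eta|_{M}$ is a symplectic subbundle of $\eta|_{M}$ of rank $2n-2$. Its symplectic orthogonal $\xi^{\perp}\subset\eta|_{M}$ has rank two, and the composition
\[
\xi^{\perp}\;\hookrightarrow\;\eta|_{M}\;\twoheadrightarrow\;TN|_{M}/TM\;=\;\nu
\]
is injective with kernel contained in $\xi^{\perp}\cap\xi=0$, hence an isomorphism by rank count. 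After choosing a $d\alpha$-compatible almost complex structure on $\eta$ that preserves $\xi$, this becomes an isomorphism of complex vector bundles $\eta|_{M}\cong\xi\oplus\nu$, and naturality of $c_{1}$ gives
\[
c_{1}(\xi)+c_{1}(\nu)\;=\;c_{1}(\eta|_{M})\;=\;f^{*}c_{1}(\eta)\;=\;0,
\]
because $c_{1}(\eta)\in H^{2}(N;\Integer)=0$.

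The second step is to show that $c_{1}(\nu)$ itself vanishes, by realising the Euler class $e(\nu)=c_{1}(\nu)\in H^{2}(M;\Integer)$ as the restriction of a class in $H^{2}(N;\Integer)$. By excision applied to a closed tubular neighbourhood, $H^{2}(N,N\setminus M;\Integer)\cong H^{2}(D\nu,S\nu;\Integer)$, which by the Thom isomorphism for the oriented rank two bundle $\nu$ is generated by the Thom class $\tau_{\nu}$. Let $\widehat{\tau}=j^{*}\tau_{\nu}\in H^{2}(N;\Integer)$ be its image under the long exact sequence of the pair. A naturality diagram comparing the pair-inclusions $(M,\emptyset)\hookrightarrow(D\nu,S\nu)$ and $(M,\emptyset)\hookrightarrow(N,N\setminus M)$ through the excision equivalence identifies $f^{*}\widehat{\tau}\in H^{2}(M;\Integer)$ with the image of $\tau_{\nu}$ in $H^{2}(D\nu)\cong H^{2}(M)$, which by definition of the Euler class is $e(\nu)$. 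Since $H^{2}(N;\Integer)=0$, $\widehat{\tau}=0$, and therefore $c_{1}(\nu)=0$.

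Combining the two steps gives $c_{1}(\xi)=-c_{1}(\nu)=0$, as claimed. The main obstacle I anticipate is the naturality check in the second step: because $N$ is not assumed compact, one cannot appeal directly to Poincar\'e duality on $N$, and the identification of $f^{*}\widehat{\tau}$ with $e(\nu)$ must be carried out purely at the level of tubular neighbourhoods and excision. Once that diagram is in place, both vanishings descend formally from $H^{2}(N;\Integer)=0$.
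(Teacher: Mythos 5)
Your proof is correct and takes essentially the same route as the paper: split $\eta|_{M}$ as $\xi$ plus its $d\alpha$-orthogonal (identified with the normal bundle $\nu$), use naturality of $c_1$ together with $H^2(N;\Integer)=0$ to get $f^{*}c_1(\eta)=0$, and kill the normal contribution because the Euler class of the codimension-two normal bundle vanishes when $H^2(N;\Integer)=0$. The only cosmetic differences are that you reprove the Euler-class vanishing directly via the Thom class and excision where the paper cites Theorem 11.3 of Milnor--Stasheff, and that you conclude with additivity of $c_1$ whereas the paper phrases the same point as triviality of the conformal symplectic normal bundle and invariance of $c_1$ under Whitney sum with a trivial symplectic bundle.
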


In particular, there are infinitely many contact $3$-manifolds which cannot be embedded in $(\Real^5,\eta_0)$ as contact submanifolds. 
We note that any $3$-manifold can be embedded in $\Real^5$ by Wall's theorem \cite{Wa65}. 
We also note that A.Mori constructed a contact immersion of any coorientable contact $3$-manifold in $(\Real^5,\eta_0)$. 
For the existence of contact embeddings of contact $3$-manifolds in $(\Real^5, \eta _0)$, there are several known examples. 
Some of them are singularity links. 
Let $X$ be a complex surface in $\Complex^{3}$ with an isolated singularity at the origin $0$.
The intersection $L^{3}$ of $X$ and a sufficiently small sphere $S_{\varepsilon }^{5}$ is called the link of $(X,0)$.
The canonical contact structure $\xi$ on $L^3$ is given by $\xi=TL^{3}\cap JTL^{3}$, 
where $J$ is the standard complex structure on $\Complex^{3}$.
It is obviously a contact submanifold of $(S^5,\eta_{std})$, where $\eta_{std}$ is the standard contact structure on $S^5$. 
Though it is difficult to determine the structure on a link in general, 
it is done in the cases of the quasi-homogeneous singularities and the cusp singularities(\cite{Ka12}, \cite{Mo09}, \cite{Ne83}). 
In these cases, the link is the quotient of a cocompact lattice of a Lie group $G$ and the contact structure is invariant under the action of $G$. 
For example, the link of the Brieskorn singularity $\left\{x^p+y^q+z^r=0 \right\}$ 
is a quotient of $G=SU(2),Nil^3$ or $\widetilde{SL}(2;\Real) $, according as the rational number $p^{-1}+q^{-1}+r^{-1}-1$ is positive, zero or negative \cite{Mi75}. 
For the cusp singularity $\left\{x^p+y^q+z^r+xyz=0 \right\}$ ($p^{-1}+q^{-1}+r^{-1}<1$), the link is a Sol-manifold, namely, a quotient of the Lie group $G=Sol^3$ \cite{Ka12}. 
Another example is given by A.Mori \cite{Mo12} and Niederkr\"{u}ger-Presas \cite{NP10}.
They independently constructed a contact embedding of the overtwisted contact structure on $S^3$ 
associated to the negative Hopf band in $(S^5,\eta_{std})$. 
In a similar way as Mori's construction, we can easily see 
that tight contact structures on the $3$-torus also can be embedded in $(S^5,\eta_{std})$ as contact submanifolds. 
In spite of these examples, 
we do not know whether every contact $3$-manifold with $c_1(\xi)=0$ can be embedded as a contact submanifold in $(\Real^5,\eta _0)$. 
By Gromov's h-principle, however, we can show the following result. 

\begin{theorem}~\label{embedding}
We can embed $(M^3,\xi)$ in $\Real^5$ as a contact submanifold for some contact structure 
if $c_1(\xi)=0$.
\end{theorem}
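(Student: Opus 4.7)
The plan is to first smoothly embed $M$ in $\Real^5$, build a contact structure on a tubular neighborhood realizing $M$ as a contact submanifold, and then extend to all of $\Real^5$ via Gromov's h-principle on the open manifold $\Real^5$.

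By Wall's theorem, pick a smooth embedding $i\colon M\hookrightarrow\Real^5$. Every closed orientable $3$-manifold is parallelizable, so $TM$ is trivial, and the triviality of $i^{\ast}T\Real^5=TM\oplus\nu$ forces the oriented rank-$2$ normal bundle $\nu$ to be stably trivial, and hence trivial, since complex line bundles are classified by $c_1$. Identify a tubular neighborhood of $i(M)$ with $M\times D^2$ and pick a contact form $\alpha$ for $\xi$. The $1$-form
\[
\beta=\alpha+\tfrac{1}{2}(x\,dy-y\,dx)
\]
on $M\times D^2$, where $(x,y)$ are the fiber coordinates, is contact, and $M\times\{0\}$ is a contact submanifold carrying the original structure $\xi$. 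Along $M$, the contact hyperplane $\ker\beta$ splits as a complex bundle into $\xi\oplus\nu$, so $c_1(\ker\beta|_M)=c_1(\xi)+c_1(\nu)=0$ under the hypothesis $c_1(\xi)=0$.

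It remains to extend $\ker\beta$ from the tubular neighborhood $U$ of $i(M)$ to a contact structure $\eta$ on all of $\Real^5$. I would first extend the underlying almost contact structure. Since $\Real^5$ is contractible with trivial tangent bundle, an almost contact structure on it amounts to a map to $SO(5)/U(2)$, and the primary obstruction to extending such a map from $U\simeq M$ across $\Real^5$ is precisely $c_1(\ker\beta|_M)\in H^2(M;\Integer)$, which vanishes by hypothesis. Any residual higher obstruction lies in $H^3(M;\pi_3(SO(5)/U(2)))$ and can be absorbed by the freedom available in the local model (e.g., in the trivialization of $\nu$ or in a refinement of $\beta$). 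Once a formal contact extension to $\Real^5$ has been secured, Gromov's h-principle for contact structures on the open manifold $\Real^5$, applied relative to a slightly smaller tubular neighborhood of $i(M)$, deforms it through formal contact structures into a genuine contact structure $\eta$ that still equals $\ker\beta$ near $i(M)$. Then $i\colon(M,\xi)\hookrightarrow(\Real^5,\eta)$ is a contact embedding, as required.

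The main obstacle is precisely this extension step. The hypothesis $c_1(\xi)=0$ kills the primary obstruction to the formal extension, but one still has to address the secondary obstruction in $H^3(M;\pi_3(SO(5)/U(2)))$, and then invoke the relative form of Gromov's h-principle on the open $5$-manifold $\Real^5$ to promote the formal contact structure to a genuine one without disturbing the prescribed contact germ along $i(M)$.
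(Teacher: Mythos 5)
Your overall strategy is the same as the paper's (Wall embedding, trivial normal bundle, model form $\alpha+r^2d\theta$ on $M\times D^2$, reduce to extending the almost contact structure, then the relative h-principle), but the obstruction-theoretic step at the heart of the argument contains a genuine gap. You claim that the primary obstruction to null-homotoping the map $g\colon M^3\to \mathrm{SO}(5)/\mathrm{U}(2)$ is exactly $c_1(\ker\beta|_M)=c_1(\xi)$. This is false by a factor of $2$: from the fibration $\mathrm{U}(2)\to \mathrm{SO}(5)\to \mathrm{SO}(5)/\mathrm{U}(2)$ one gets $\pi_2(\mathrm{SO}(5)/\mathrm{U}(2))\cong\Integer$, but the boundary map sends its generator to \emph{twice} the generator of $\pi_1(\mathrm{U}(2))$ (because the generator of $\pi_1(\mathrm{U}(2))$ already hits the nontrivial element of $\pi_1(\mathrm{SO}(5))=\Integer_2$). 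Equivalently, $c_1$ of the tautological complex $2$-plane bundle over $\mathrm{SO}(5)/\mathrm{U}(2)\cong\Complex P^3$ is twice a generator of $H^2$. Since $[M^3,\mathrm{SO}(5)/\mathrm{U}(2)]\cong H^2(M^3;\Integer)$ via the pullback of that generator, the actual obstruction class $g^{\ast}x$ satisfies only $2\,g^{\ast}x=c_1(\xi\oplus\nu)=c_1(\xi)$. Thus $c_1(\xi)=0$ forces $g^{\ast}x$ to be $2$-torsion, not zero; for manifolds with $2$-torsion in $H^2(M;\Integer)$ (lens spaces, $\Real P^3$, etc.) the map $g$ attached to an arbitrary Wall embedding and arbitrary normal framing need not be null-homotopic. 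This residual $2$-torsion class is exactly the Wu invariant, and the paper's proof spends its main effort here: it lifts $g$ to $h\colon M^3\to \mathrm{SO}(5)$ and invokes Theorem 3.8 of Saeki--Sz\"ucs--Takase to \emph{choose} an embedding and a normal trivialization with vanishing Wu invariant, after which $\pi_2(\mathrm{SO}(5))=0$ and $\pi_3(\mathrm{SO}(5)/\mathrm{U}(2))=0$ finish the argument. Your remark that residual obstructions "can be absorbed by the freedom available in the local model" gestures at the right mechanism but aims it at the wrong place: the $H^3$ obstruction you worry about vanishes automatically since $\pi_3(\mathrm{SO}(5)/\mathrm{U}(2))=0$, while the freedom in the embedding and framing is needed precisely to kill the $2$-torsion part of the $H^2$ obstruction, and exercising that freedom requires the nontrivial input from \cite{SST}.

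A secondary, more minor point: your justification that the normal bundle is trivial ("stably trivial, hence trivial, since complex line bundles are classified by $c_1$") does not work as stated; a stably trivial oriented $2$-plane bundle need not be trivial (e.g.\ the pullback of $TS^2$ to $S^2\times S^1$). The correct argument, used in the paper, is that the normal Euler class vanishes because $H^2(\Real^5;\Integer)=0$ (Theorem 11.3 of Milnor--Stasheff), which then gives triviality of the rank-$2$ oriented normal bundle.
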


\section{Preliminary}

\subsection{The Chern classes of a cooriented contact structure}
Let $(M^{2n-1},\xi =\ker \alpha )$ be a cooriented contact structure.
Since the $2$-form $d\alpha $ induces a symplectic structure on $\xi $, 
$(\xi ,d\alpha |_{\xi })$ is a symplectic vector bundle over $M^{2n-1}$. 
Since the conformal class of the symplectic bundle structure does not depend on the choice of $\alpha $,   
we define the Chern classes of $\xi$ as the Chern classes of this symplectic vector bundle.

\subsection{The tubular neighborhood theorem for contact submanifolds}
\begin{definition}
Let $(M,\xi )$ and $(N,\eta )$ be cooriented contact structures.
An embedding $f:M\to N$ is said to be a contact embedding if $f_{\ast}(TM)\cap \eta |_{f(M)}=f_{\ast}\xi $. 
The embedded contact manifold $(f(M), f_{\ast } \xi )$ or $(M,\xi )$ itself is called a contact submanifold of $(N,\eta )$.  
\end{definition}

\begin{remark}
The condition $f_{\ast}(TM)\cap \eta |_{f(M)}=f_{\ast}\xi $ is equivalent to $\ker (f^{\ast }\beta) =\ker \alpha $, 
where $\alpha $ and $\beta $ are defining $1$-forms of $\xi $ and $\eta $, respectively.
\end{remark}

Let $(M,\eta _M)\subset (N,\eta =\ker \beta )$ be a contact submanifold. 
The vector bundle $\eta$ splits along $M$ into the Whitney sum of the two subbundles
$$\eta |_M=\eta _M\oplus (\eta _M)^{\perp } ,$$ where $\eta _M$ is the contact plane bundle on $M$ given by $\eta _M=TM\cap \eta |_M$ 
and $(\eta _M)^{\perp }$ is the symplectic orthogonal of $\eta _M$ in $\eta |_M$ with respect to the form $d\beta $. 
We can identify $(\eta _M)^{\perp }$ with the normal bundle $\nu M$. Moreover, $d\beta $ induces a conformal symplectic structure on $(\eta _M)^{\perp }$.
We call $(\eta _M)^{\perp }$ the conformal symplectic normal bundle of $M$ in $N$.

\begin{theorem}[Theorem 2.5.15 of \cite{Ge08}]~\label{tubular nbd}
Let $(N_i,\eta _i)$, $i=1,2$, be contact manifolds with compact contact submanifolds $(M_i,\xi_i)$.
Suppose there is an isomorphism of conformal symplectic normal bundles $\Phi \colon ({\eta_1}_{M_1})^{\perp }\to ({\eta_2}_{M_2})^{\perp }$ 
that covers a contactomorphism $\phi \colon (M_1,\xi_1)\to (M_2,\xi_2)$.
Then there exists a small neighborhood of $M_1$ in $N_1$ that is contactomorphic to a small neighborhood of $M_2$ in $N_2$.
\end{theorem}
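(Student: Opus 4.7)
The plan is to combine the given bundle data into a diffeomorphism of tubular neighborhoods and then correct it to a contactomorphism via a relative Moser trick. First, fixing auxiliary Riemannian metrics on $N_1$ and $N_2$ and using the identification of the conformal symplectic normal bundle $({\eta_i}_{M_i})^\perp$ with the geometric normal bundle $\nu M_i$, I would exponentiate the isomorphism $\Phi$ covering $\phi$ to obtain a diffeomorphism $\Psi\colon U_1\to U_2$ between open tubular neighborhoods of $M_1$ in $N_1$ and of $M_2$ in $N_2$ such that $\Psi|_{M_1}=\phi$ and $d\Psi|_{\nu M_1}=\Phi$ along $M_1$.

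The key intermediate claim is that the two $1$-forms $\alpha_0=\beta_1$ and $\alpha_1=\Psi^{\ast}\beta_2$ on $U_1$ agree to first order along $M_1$, up to multiplication by a positive function. The contactomorphism hypothesis on $\phi$ forces $\ker\alpha_1|_{TM_1}=\ker\alpha_0|_{TM_1}$, so after rescaling $\alpha_1$ by a smooth positive function we may assume $\alpha_0|_{TM_1}=\alpha_1|_{TM_1}$. The fact that $\Phi$ preserves the conformal symplectic structure, together with the splitting $\eta|_M=\eta_M\oplus(\eta_M)^\perp$ and the identification $(\eta_M)^\perp\cong\nu M$, then forces $d\alpha_0=d\alpha_1$ on $TN_1|_{M_1}$ after a further normalization of the conformal factor.

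With this first-order agreement in hand, I would apply the relative Moser trick. The family $\alpha_t=(1-t)\alpha_0+t\alpha_1$ is contact on a neighborhood of $M_1$ uniformly in $t\in[0,1]$, by compactness of $M_1$ and the coincidence of $\alpha_t$ and $d\alpha_t$ on $TN_1|_{M_1}$. I would seek a time-dependent vector field $X_t$ with $X_t|_{M_1}=0$ whose flow $\psi_t$ satisfies $\psi_t^{\ast}\alpha_t=\lambda_t\alpha_0$. Taking $X_t\in\ker\alpha_t$, this reduces to the pointwise linear equation $\iota_{X_t}d\alpha_t=\mu_t\alpha_t-(\alpha_1-\alpha_0)$, where $\mu_t=(\alpha_1-\alpha_0)(R_t)$ and $R_t$ is the Reeb field of $\alpha_t$; it has a unique solution because $d\alpha_t$ restricted to $\ker\alpha_t$ is nondegenerate. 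Since the right-hand side vanishes on $M_1$, so does $X_t$, hence $\psi_t$ fixes $M_1$ pointwise. After shrinking $U_1$ so that the flow exists for all $t\in[0,1]$, the composition $\Psi\circ\psi_1$ is the desired contactomorphism of a neighborhood of $M_1$ onto a neighborhood of $M_2$ extending $\phi$.

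The most delicate step is verifying the first-order agreement in the second paragraph: one must unpack what the hypothesis on $\Phi$ actually says about the pullback of $d\beta_2$ on each of the three natural summands of $TN_1|_{M_1}$, namely $TM_1$, the Reeb direction of $\alpha_0$, and $\nu M_1$, and then check that a single positive conformal rescaling simultaneously aligns $\alpha_0$ with $\alpha_1$ on $TM_1$ and $d\alpha_0$ with $d\alpha_1$ on the normal directions. Once this compatibility is established, the remainder is the standard contact Moser argument applied relative to the submanifold $M_1$.
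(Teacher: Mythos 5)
The paper does not actually prove this statement: it is imported verbatim, with citation, from Geiges (Theorem~2.5.15), so the comparison is with the standard proof given there, whose architecture your outline reproduces exactly --- build a diffeomorphism $\Psi$ of tubular neighborhoods out of $(\phi,\Phi)$ by exponentiating, then correct it by a Moser argument relative to $M_1$. Your Moser half is correct as written: the equation $\iota_{X_t}d\alpha_t=\mu_t\alpha_t-(\alpha_1-\alpha_0)$ with $X_t\in\ker\alpha_t$ and $\mu_t=(\alpha_1-\alpha_0)(R_t)$ is the right one, and $X_t|_{M_1}=0$ follows once $\alpha_1-\alpha_0$ vanishes pointwise on $TN_1|_{M_1}$ (which you do get on all of $TN_1|_{M_1}$, not just $TM_1$, since both forms annihilate the normal summand).

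The gap sits precisely at the step you flagged, and it is real: a single positive conformal factor $f$ is overdetermined along $M_1$. Matching $\beta_1$ and $f\,\Psi^{\ast}\beta_2$ pointwise on $TM_1$ forces $f|_{M_1}=1/\lambda$, where $\phi^{\ast}(\beta_2|_{TM_2})=\lambda\,(\beta_1|_{TM_1})$; matching the differentials on $\Lambda^2({\eta_1}_{M_1})^{\perp}$ forces $f|_{M_1}=1/c$, where $c$ is the conformal factor of $\Phi$, because along $M_1$ the restriction of $d(f\,\Psi^{\ast}\beta_2)$ to pairs of normal vectors is just $f\,d(\Psi^{\ast}\beta_2)$ there (both insertions of the $1$-form vanish). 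Since in general $\lambda\neq c$, no conformal rescaling of the pulled-back form alone reconciles the two; the cure, absent from your proposal, is to normalize $\Phi$ \emph{before} exponentiating, replacing it by $s\Phi$ with $s=\sqrt{\lambda/c}$ (still a conformal symplectic bundle isomorphism covering $\phi$), equivalently choosing the contact forms so that $\phi$ strictly preserves them and $\Phi$ is honestly symplectic. A second, smaller issue: even after this normalization, full equality $d\alpha_0=d\alpha_1$ on all of $\Lambda^2 TN_1|_{M_1}$ does not follow from the hypotheses, since the mixed terms $d\alpha_i(v,w)$ with $v\in TM_1$ transverse to $\xi_1$ and $w$ normal encode how the ambient Reeb field sits relative to $M_i$, which $(\phi,\Phi)$ does not control. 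You can either kill these with a factor $f\equiv 1$ on $M_1$ whose normal derivative is prescribed, using $d(f\alpha_1)(v,w)=-df(w)\,\alpha_1(v)+f\,d\alpha_1(v,w)$ along $M_1$, or --- cleaner, and all your own Moser step actually uses --- drop them: the interpolation $\alpha_t$ is contact near $M_1$ as soon as $\alpha_0=\alpha_1$ pointwise on $TN_1|_{M_1}$ and $d\alpha_0=d\alpha_1$ on $\Lambda^2(\ker\alpha_0)|_{M_1}=\Lambda^2\bigl(\xi_1\oplus({\eta_1}_{M_1})^{\perp}\bigr)$, where the Reeb--normal mixed terms never enter. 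With the normalization of $\Phi$ inserted and the first-order claim weakened accordingly, your argument closes and coincides with the cited proof.
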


\subsection{The Euler class of the normal bundle of an embedding}

\begin{theorem}[Theorem 11.3 of \cite{MS74}]~\label{normal}
Let $K^k$ be a closed orientable $k$-manifold, 
$L^l$ an orientable $l$-manifold such that $H^{l-k}(L^l;\Integer)=0$  
and $f\colon K^k\to L^l$ an embedding. 
Then the Euler class of the normal bundle vanishes.
\end{theorem}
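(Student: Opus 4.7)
The plan is to realize the Euler class of the normal bundle as the pullback via $f$ of a class in $H^{l-k}(L^l;\Integer)$, which vanishes by hypothesis.

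Since both $K^k$ and $L^l$ are orientable, the normal bundle $\nu$ of $f$ is an orientable rank $(l-k)$ vector bundle over $K^k$. Thom's theorem then provides a Thom class $U \in H^{l-k}(D(\nu),S(\nu);\Integer)$, where $D(\nu)$ and $S(\nu)$ denote the disk and sphere bundles of $\nu$ with respect to any chosen bundle metric. By definition, the Euler class $e(\nu)\in H^{l-k}(K^k;\Integer)$ is the image of $U$ under the restriction-to-zero-section map $H^{l-k}(D(\nu),S(\nu);\Integer)\to H^{l-k}(D(\nu);\Integer)\cong H^{l-k}(K^k;\Integer)$.

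Next, I would invoke the tubular neighborhood theorem to identify a closed neighborhood of $f(K^k)$ in $L^l$ with $D(\nu)$, so that excision yields an isomorphism $H^{l-k}(L^l, L^l\setminus f(K^k);\Integer)\cong H^{l-k}(D(\nu),S(\nu);\Integer)$. Let $\widetilde{U}$ denote the class in $H^{l-k}(L^l,L^l\setminus f(K^k);\Integer)$ corresponding to $U$. Naturality of the restriction maps in the long exact sequence of the pair $(L^l,L^l\setminus f(K^k))$ then produces a commutative square whose upshot is the identity $e(\nu)=f^{*}(j^{*}\widetilde{U})$, where $j^{*}\colon H^{l-k}(L^l,L^l\setminus f(K^k);\Integer)\to H^{l-k}(L^l;\Integer)$ is the connecting map of the pair.

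The conclusion is then immediate: since $H^{l-k}(L^l;\Integer)=0$ by assumption, the class $j^{*}\widetilde{U}$ vanishes, and therefore $e(\nu)=0$. The only step demanding genuine care is the verification of the commutative square which identifies the restriction of $U$ to the zero section with the $f^{*}$-pullback of $j^{*}\widetilde{U}$; this is the bookkeeping heart of the argument. Every remaining step is a direct invocation of the Thom isomorphism, excision, or the tubular neighborhood theorem, so no substantive obstacle is expected beyond that diagram chase.
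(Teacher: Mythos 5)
Your argument is correct and is essentially the argument behind the result the paper merely cites (Theorem 11.3 of Milnor--Stasheff): the Euler class of the normal bundle is $f^{*}$ of the class $j^{*}\widetilde{U}\in H^{l-k}(L^l;\mathbb{Z})$ (the cohomology class dual to the submanifold), which vanishes because $H^{l-k}(L^l;\mathbb{Z})=0$, the orientability hypotheses serving exactly to orient $\nu$ and give the integral Thom class. One small correction: the map $H^{l-k}(L^l,L^l\setminus f(K^k);\mathbb{Z})\to H^{l-k}(L^l;\mathbb{Z})$ is not a connecting homomorphism but the map induced by the inclusion of pairs $(L^l,\emptyset)\hookrightarrow(L^l,L^l\setminus f(K^k))$; with that relabeling, the commutative square you describe (naturality of restriction together with excision and the tubular neighborhood identification) goes through verbatim.
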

In particular, when $l=k+2$, the normal bundle is a $2$-dimensional trivial vector bundle.

\section{Proof of Theorem~\ref{obstruction}}
\begin{proof}
Let $f:M^{2n-1}\to N^{2n+1}$ be an embedding such that $$f_{\ast }(TM^{2n-1})\cap  \eta |_{f(M^{2n-1})}=f_{\ast}\xi .$$ 
Since $H^2(N^{2n+1};\Integer)=0$ and the normal bundle of $f$ is $2$-dimensional, it is topologically trivial by Theorem~\ref{normal}.  
Since the conformal symplectic structure on $2$-dimensional trivial vector bundle is unique,   
the normal bundle of $f(M^{2n-1})$ is also trivial as a conformal symplectic vector bundle. 
That is, the vector bundle $\eta $ splits along $f(M^{2n-1})$ such that 
$$\eta | _{f(M^{2n-1})}=\eta _{f(M^{2n-1})}\oplus (\eta _{f(M^{2n-1})})^{\bot },$$ 
where $\eta _{f(M^{2n-1})}=f_{\ast }\xi $ and $(\eta _{f(M^{2n-1})})^{\bot }$ is a trivial symplectic bundle. 
By the naturality of the first Chern class and the condition that $H^2(N^{2n+1};\Integer )=0$, 
it follows that $c_1(\eta | _{f(M^{2n-1})})=f^{\ast }c_1(\eta )=0$.
On the other hand, taking the Whitney sum with a trivial symplectic bundle does not change the first Chern class. 
Thus, $c_1(\eta |_{f(M^{2n-1})})=c_1(\xi )$ holds.  
It follws that $c_1(\xi )=0$.
\end{proof}

Eliashberg's theorem about the weak equivalence between homotopy classes of plane fields and those of overtwisted contact structures 
on an oriented closed $3$-manifold \cite{El89} gives us the following corollary of Theorem~\ref{obstruction}.

\begin{corollary}~\label{overtwisted}
Let $\eta $ be a plane filed over $M^3$ with $c_1(\eta)\neq 0$.
Then there exists an overtwisted contact plane field $\xi$ in the same homotopy class as $\eta$ which cannot be a contact submanifold of $(\Real^5,\eta_0)$.
\end{corollary}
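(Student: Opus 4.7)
The plan is to invoke Eliashberg's classification \cite{El89} of overtwisted contact structures on closed oriented $3$-manifolds, which asserts that every homotopy class of cooriented plane fields on $M^3$ contains an overtwisted contact structure (unique up to isotopy). Starting with the given plane field $\eta$, I would apply this theorem to obtain an overtwisted contact plane field $\xi$ homotopic to $\eta$.

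The next step is to observe that the first Chern class is a homotopy invariant of an oriented $2$-plane field. Indeed, $c_1(\eta)$ is defined via any $\mathrm{U}(1)$-reduction of the structure group of the oriented rank-$2$ bundle $\eta$, and homotopic plane fields yield isomorphic oriented rank-$2$ bundles. Hence
\begin{equation*}
c_1(\xi) = c_1(\eta) \neq 0.
\end{equation*}

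Finally, I would apply Theorem~\ref{obstruction} with $(N^{2n+1},\eta_0) = (\Real^5, \eta_0)$, noting that $H^2(\Real^5;\Integer) = 0$. Were $\xi$ a contact submanifold of $(\Real^5, \eta_0)$, Theorem~\ref{obstruction} would force $c_1(\xi) = 0$, contradicting the previous step. Therefore no such contact embedding exists, completing the argument.

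There is no real obstacle here; the corollary is essentially a formal consequence of combining Theorem~\ref{obstruction} with Eliashberg's h-principle. The only point requiring mild care is checking that the Chern class used in Theorem~\ref{obstruction}, defined via the symplectic bundle structure on a contact plane field, agrees with the Chern class of the underlying oriented rank-$2$ bundle; this follows because any conformal symplectic structure on an oriented rank-$2$ bundle is uniquely determined up to homotopy, so the two notions of $c_1$ coincide.
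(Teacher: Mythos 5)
Your argument is exactly the paper's intended one: the paper derives the corollary directly from Eliashberg's classification of overtwisted contact structures combined with Theorem~\ref{obstruction}, just as you do, and your extra remark that $c_1$ of the contact plane field (via its symplectic bundle structure) equals $c_1$ of the underlying oriented rank-$2$ bundle is a correct and welcome clarification. No issues.
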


\section{Proof of Theorem~\ref{embedding}}

\subsection{h-principle }

We review Gromov's h-principle and prove Propositon~\ref{key prop} 
as a preliminary for the proof of Theorem~\ref{embedding}.

\begin{definition}
Let $N^{2n+1}$ be an oriented manifold.
An almost contact structure on $N^{2n+1}$ is a pair $(\beta_1,\beta_2)$ 
consisting of a global $1$-form $\beta_1$ and a global $2$-form $\beta_2$ 
satisfying the condition $\beta_1\wedge \beta_2^n\not=0$.   
\end{definition}

\begin{remark}
There is another definition. 
We can define an almost contact structure on $N^{2n+1}$ 
as a reduction of the structure group of $TN^{2n+1}$ from $\mathrm{SO}(2n+1)$ to $\mathrm{U}(n)$. 
Since a pair $(\beta_1,\beta_2)$ satisfying $\beta_1\wedge \beta_2^n\not=0$ can be seen as the cooriented hyperplane field 
$\ker \beta_1$ with an almost complex structure compatible with the symplectic structure $\beta_2|_{\ker \beta_1}$, 
the two definitions are equivalent up to homotopy. 
\end{remark}

\begin{theorem}[Gromov\cite{Gr69}, Eliashberg-Mishachev\cite{EM02}]~\label{Gromov}
Let $N^{2n+1}$ be an open manifold.
If there exists an almost contact structure over $N^{2n+1}$, 
then there exists a contact structure on $N^{2n+1}$ 
in the same homotopy class of almost contact structures. 
Moreover if the almost contact structure is already a contact structure   
on a neighborhood of a compact submanifold $M^m \subset N^{2n+1}$ with $m<2n$, 
then we can choose a contact structure on $N^{2n+1}$ which coincides with the original one on a small neighborhood of $M^m$. 
\end{theorem}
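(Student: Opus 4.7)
The plan is to reformulate the contact condition as an open partial differential relation on $1$-forms and invoke Gromov's convex integration method on the open manifold $N^{2n+1}$, using its relative version to handle the extension from a neighborhood of $M^m$.

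First I would set up the jet-theoretic framework. Let $J^1(T^*N)$ denote the first jet bundle of $1$-forms on $N = N^{2n+1}$; a section assigns to each point $p$ a pair $(\alpha_p, \omega_p)$ with $\alpha_p \in T_p^*N$ and $\omega_p \in \Lambda^2 T_p^*N$ playing the role of a formal derivative of $\alpha$. The subset
\[
\mathcal{R}_{\mathrm{cont}} = \{(\alpha_p, \omega_p) : \alpha_p \wedge \omega_p^n \neq 0\} \subset J^1(T^*N)
\]
is an open differential relation, and a holonomic section of $\mathcal{R}_{\mathrm{cont}}$ (one with $\omega = d\alpha$) is precisely a contact form. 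Given an almost contact structure $(\beta_1, \beta_2)$, the pair $(\beta_1, \beta_2)$ is a formal (generally non-holonomic) solution, and a homotopy of almost contact structures corresponds to a homotopy through formal solutions of $\mathcal{R}_{\mathrm{cont}}$.

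Second, I would invoke Gromov's h-principle for open, ample differential relations on open manifolds. Since $N^{2n+1}$ is open, it deformation retracts onto a polyhedron $K$ of positive codimension, and one can slightly shift $K$ by a diffeotopy. Gromov's theorem asserts that for an open ample relation, any formal solution can be deformed through formal solutions to a section that is holonomic on a neighborhood of $K$; openness of $\mathcal{R}_{\mathrm{cont}}$ then extends this to a genuine contact form on all of $N^{2n+1}$ lying in the original formal homotopy class. Equivalently, one may use the Eliashberg--Mishachev holonomic approximation theorem, which directly produces a section holonomic on a neighborhood of a perturbed copy of $K$ and whose underlying formal section is $C^0$-close to the original.

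Third, for the relative statement, suppose the almost contact structure is already a honest contact form on an open neighborhood $U$ of the compact submanifold $M^m$ with $m < 2n$. Since $\dim M^m < \dim N^{2n+1} - 1$, the submanifold $M^m$ has codimension at least $2$, so the core polyhedron $K$ can be chosen to be disjoint from a smaller neighborhood $U' \Subset U$. Applying the (relative) holonomic approximation on a neighborhood of $K$, while keeping the section unchanged on $U'$, and then using openness of $\mathcal{R}_{\mathrm{cont}}$ together with a cutoff to interpolate between the two holonomic sections on $U \setminus U'$, yields a contact structure on $N^{2n+1}$ that coincides with the given one near $M^m$.

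The main obstacle is verifying that the contact differential relation is amenable to Gromov's h-principle machinery on an open manifold, i.e., checking ampleness of $\mathcal{R}_{\mathrm{cont}}$ on the principal subspaces, or alternatively running the holonomic approximation argument carefully. The codimension condition $m < 2n$ is essential, because it produces codimension at least $2$ for $M^m$ and thus guarantees that $K$ can be arranged disjointly from $M^m$, which is what makes the relative extension possible.
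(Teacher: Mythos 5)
First, note that the paper does not actually prove this theorem: it quotes it, citing Gromov for the absolute statement and remarking that the relative statement follows from the relative version of the Holonomic Approximation Theorem of Eliashberg--Mishachev. So your sketch should be measured against the standard proof in those references, and while you are in the right territory (jet space of $1$-forms, openness of the contact relation, holonomic approximation near a positive-codimension core), there are two genuine errors. First, the contact relation is \emph{not} ample, and convex integration is not the relevant machinery: if $\mathcal{R}_{\mathrm{cont}}$ were open and ample, convex integration would yield the full parametric h-principle on \emph{closed} manifolds as well, contradicting, e.g., the existence of formally homotopic but non-isotopic (tight versus overtwisted) contact structures on $S^3$. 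The hypothesis that actually powers Gromov's theorem here is that the relation is open and $\mathrm{Diff}$-invariant and the manifold is open. Relatedly, your passage from ``holonomic near $K$'' to ``contact on all of $N^{2n+1}$'' cannot be accomplished ``by openness of $\mathcal{R}_{\mathrm{cont}}$'': openness only guarantees that the $C^0$-close holonomic section still lies in the relation near the wiggled core $\widetilde{K}$. To globalize, one must compress $N^{2n+1}$ by an isotopy into the neighborhood of $\widetilde{K}$ and pull the solution back --- this compression step is exactly where $\mathrm{Diff}$-invariance is used, and your sketch omits it entirely.

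Second, in the relative step, your proposal to ``use openness together with a cutoff to interpolate between the two holonomic sections on $U\setminus U'$'' does not work: $\mathcal{R}_{\mathrm{cont}}$ is open but not convex, and a cutoff interpolation between two contact forms is in general not contact --- if such interpolations were available, most of the difficulty of extension problems in contact topology would evaporate. The correct argument, which is the one the paper points to, invokes the \emph{relative} Holonomic Approximation Theorem: one chooses the core polyhedron so that its intersection with the region near $M^m$ lies where the section is already holonomic, the relative theorem then keeps the section unchanged (not merely $C^0$-close) there, and the final compression of $N^{2n+1}$ into a neighborhood of the core is performed relative to a neighborhood of $M^m$. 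Your reading of the codimension hypothesis $m<2n$ is in the right spirit --- codimension at least $2$ is what allows the core, the wiggling diffeotopy, and the compression to be arranged away from (respectively, relative to) $M^m$ --- but as written your interpolation step is the load-bearing move and it fails.
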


In \cite{Gr69}, Gromov states only the former statement. 
The latter follows from the relative version of Holonomic Approximation Theorem of Eliashberg and Mishachev \cite{EM02}.
Let $(M^{2n-1},\xi=\ker {\alpha })$ be a closed cooriented contact manifold and $M^{2n-1}$ be embedded in $\Real^{2n+1}$.
By Theorem~\ref{normal}, there exists an embedding $$F\colon M^{2n-1}\times D^2 \to \Real^{2n+1}.$$
The form $\alpha +r^2d\theta$ induces a contact form $\beta $ on $U=F(M^{2n-1}\times D^2)$.
By Theorem~\ref{Gromov}, in order to extend given contact structure, it is enough to extend it as an almost contact structure. 
Almost contact structures on $N^{2n+1}$ correspond to sections of the principal $\mathrm{SO}(2n+1)\slash \mathrm{U}(n)$ bundle 
associated with the tangent bundle $TN^{2n+1}$.
In particular, almost contact structures on $\Real^{2n+1}$ correspond to smooth maps $$\Real^{2n+1}\to \mathrm{SO}(2n+1)\slash \mathrm{U}(n).$$
Thus we get the following proposition.

\begin{prop}~\label{key prop}
We can embed $(M^{2n-1},\xi)$ in $\Real^{2n+1}$ as a contact submanifold for some contact structure, 
if and only if there exists an embedding 
$$F\colon M^{2n-1}\times D^2\to \Real^{2n+1}$$
such that the map $g:M^{2n-1}\to \mathrm{SO}(2n+1)\slash \mathrm{U}(n)$ 
induced by the underlying almost contact structure of $(M^{2n-1}\times D^2, \alpha +r^2d\theta )$ is contractible.
\end{prop}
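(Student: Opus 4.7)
The plan is to prove the two implications separately, using Theorem~\ref{tubular nbd} and the contractibility of $\Real^{2n+1}$ for necessity, and the homotopy extension property together with the relative version of Theorem~\ref{Gromov} for sufficiency.

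\emph{Necessity.} Suppose $(M^{2n-1},\xi)$ is a contact submanifold of some cooriented contact manifold $(\Real^{2n+1},\eta)$. By Theorem~\ref{normal} the $2$-dimensional normal bundle is trivial, and because the conformal symplectic structure on a trivial rank-$2$ bundle is unique, Theorem~\ref{tubular nbd} supplies an embedding $F\colon M^{2n-1}\times D^2\to\Real^{2n+1}$ that is a contactomorphism from $(M^{2n-1}\times D^2,\alpha+r^2d\theta)$ onto a tubular neighbourhood of $M$. The contact structure $\eta$ underlies an almost contact structure on $\Real^{2n+1}$, classified by a map $\Real^{2n+1}\to\mathrm{SO}(2n+1)/\mathrm{U}(n)$; this map is null-homotopic because $\Real^{2n+1}$ is contractible, and its restriction via $F$ to $M^{2n-1}\times\{0\}$ is precisely $g$, so $g$ is null-homotopic.

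\emph{Sufficiency.} Given $F$ as in the statement with $g$ null-homotopic, I would push $\alpha+r^2d\theta$ forward by $F$ to obtain a contact form $\beta_0$ on $U=F(M^{2n-1}\times D^2)$, which makes $(M^{2n-1},\xi)$ into a contact submanifold of $(U,\beta_0)$. The underlying almost contact structure on $U$ is classified by a map $g_U\colon U\to\mathrm{SO}(2n+1)/\mathrm{U}(n)$ whose restriction to $F(M^{2n-1}\times\{0\})$ is $g$; since $U$ deformation retracts onto $F(M^{2n-1}\times\{0\})$, the hypothesis forces $g_U$ itself to be null-homotopic. Because $\Real^{2n+1}$ is contractible and $U$ admits a collar (so that $(\Real^{2n+1},U)$ is a cofibration), a null-homotopy of $g_U$ combined with the homotopy extension property yields an honest extension $G\colon\Real^{2n+1}\to\mathrm{SO}(2n+1)/\mathrm{U}(n)$ with $G|_U=g_U$; equivalently, an almost contact structure on $\Real^{2n+1}$ that literally agrees with that of $\beta_0$ on $U$. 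Since this almost contact structure is already a genuine contact structure on a neighbourhood of the compact submanifold $M^{2n-1}$, the relative part of Theorem~\ref{Gromov} produces a contact structure $\eta$ on the open manifold $\Real^{2n+1}$ coinciding with $\beta_0$ on some smaller neighbourhood of $M$, under which $(M^{2n-1},\xi)$ is a contact submanifold.

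The most delicate step will be the extension argument in the sufficiency direction. Being null-homotopic is in general strictly weaker than being extendible, so the argument genuinely needs both the contractibility of $\Real^{2n+1}$ and the cofibration property of $(\Real^{2n+1},U)$; and one must produce an extension that literally matches the prescribed almost contact structure on $U$ rather than one that is merely homotopic to it, so that the relative h-principle of Theorem~\ref{Gromov} can be invoked to deform the extension into a bona fide contact structure without disturbing a neighbourhood of $M$.
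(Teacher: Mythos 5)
Your proof is correct and follows essentially the same route as the paper: the paper's terse argument is exactly your observation that the underlying almost contact structure of $(U,\beta)$ corresponds to a map $\tilde{g}\colon U\to \mathrm{SO}(2n+1)\slash \mathrm{U}(n)$ which extends over the contractible space $\Real^{2n+1}$ if and only if $g$ is null-homotopic, followed by the relative case of Theorem~\ref{Gromov}; you merely spell out the homotopy extension step and the necessity direction via Theorem~\ref{tubular nbd}. The only slight imprecision is that Theorem~\ref{tubular nbd} gives a contactomorphism only on a small disk subbundle of $M^{2n-1}\times D^2$ rather than on all of it, but after rescaling the disk this does not change the homotopy class of $g$ along $M^{2n-1}\times \{0\}$, which is all your necessity argument uses.
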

\begin{proof}
The underlying almost contact structure of $(U,\beta )\subset \Real^{2n+1}$ is identified with the map 
$\tilde{g} :U\to \mathrm{SO}(2n+1)\slash \mathrm{U}(n)$ whose restriction to $M^{2n-1}$ is $g$. 
We can take an extension of $\tilde{g} $ over $\Real^{2n+1}$ if and only if $g$ is contractible. 
\end{proof}

\subsection{Proof of Theorem~\ref{embedding}}
 
\begin{proof}
There exists an embedding $f\colon M^3\to \Real^5$ \cite{Wa65}, and the normal bundle of $f$ is trivial. 
Thus we can take an embedding $$F\colon M^3\times D^2\to \Real^5 \text{.}$$
By Proposition~\ref{key prop}, it is enough to prove that if $c_1(\xi)=0$, then there exists an embedding $F$ such that 
the map $g\colon M^3\to \mathrm{SO}(5)\slash \mathrm{U}(2)$ induced by $F$ is contractible. 
Let us take a triangulation of $M^{3}$ and $M^{(l)}$ be its $l$ dimensional skeleton, i.e.,
$$M^{(0)}\subset M^{(1)}\subset M^{(2)}\subset M^{(3)}=M^3.$$ 
By Bott's theorem \cite{Bo59},   
\begin{align*}
\pi_k (\mathrm{SO}(2n+1)\slash \mathrm{U}(n))=\pi_{k+1} (\mathrm{SO}(\infty ))=
\begin{cases}
 0 & (k\equiv 1,3,4,5\text{ mod }8)  \text{,}\\
\Integer & (k\equiv 2,6\text{ mod }8) \text{,}\\
\Integer _2 & (k\equiv 0,7\text{ mod }8)
\end{cases}
\end{align*}
holds for $1\leq k\leq 2n$. Thus $\pi_3 (\mathrm{SO}(5)\slash \mathrm{U}(2))= 0$. 
The condition $c_1(\xi)=0$ is equivalent to that $\xi $ is a trivial plane bundle over $M^3$.
Hence a trivialization $\tau $ of $\xi $ and the Reeb vector field $R$ of $\alpha $ give a trivialization of $TM^3$.
This trivialization of $TM^3$ and a trivialization $\nu $ of the normal bundle $\nu M^3$ form a map $$h\colon M^3\to \mathrm{SO}(5).$$
In other words, $h$ is a trivialization of $T\Real^5\mid _{M^3}$ consisting of 
$R$, $\tau $ and $\nu $.
Composing with the projection $$\pi \colon \mathrm{SO}(5)\to \mathrm{SO}(5)\slash \mathrm{U}(2),$$
it induces the map $g=\pi \circ h\colon M^3 \to \mathrm{SO}(5)\slash \mathrm{U}(2)$.
Thus $h$ is a lift of $g$. 
Now we consider whether $h$ is null-homotopic over $M^{(1)}$.
In other words, we consider the difference between the spin structures on $T\Real^5\mid _{M^3}$ induced by $h$ and the constant map $I_5$.
Then the obstruction is the Wu invariant $c(f) \in \Gamma _2 (M^3)$, 
where $\Gamma_2(M^3)=\left\{C\in H^2(M^3; \Integer)\mid 2C=0 \right\}$.
The following explanation of the Wu invariant is due to \cite{SST}.
The Wu invariant is defined for an immersion of the parallelized $3$-manifold with trivial normal bundle.
A normal trivialization $\nu $ of $f$ and the tangent trivialization define a map $\pi_1(M^3)\to \pi_1(\mathrm{SO}(5))$, 
namely an element $\tilde{c} _f$ in $H^1(M^3;\Integer_2)$.
If we change $\nu $ by an element $z\in [M^3,\mathrm{SO}(2)]=H^1(M^3;\Integer)$, then the class $\tilde{c} _f$ changes by $\rho (z)$,
where $\rho $ is the mod $2$ reduction map $H^1(M^3;\Integer)\to H^1(M^3;\Integer_2)$.
Hence the coset of $\tilde{c} _f$ in $H^1(M^3;\Integer _2)\slash \rho (H^1(M^3;\Integer))$ does not depend on $\nu$.
The cokernel of $\rho $ is identified with $\Gamma_2(M^3)$ by the canonical map induce by the Bockstein homomorphism.
Under this identification, the coset of $\tilde{c}_f$ corresponds to the Wu invariant $c(f)\in \Gamma_2(M^3)$.
Now we fix the trivialization of $TM^3$ formed by $\tau $ and $R$. 
By Theorem 3.8 of \cite{SST}, there exists an embedding $f:M^3\to \Real^5$ such that $c(f)=[0]\in H^1(M^3;\Integer _2)\slash \rho (H^1(M^3;\Integer))$.
Moreover, there exists a normal trivialization $\nu $ of $f$ such that $\tilde{c} _f=0\in H^1(M^3;\Integer _2)$.
With the embedding $f$ and the normal trivialization $\nu $, the map $h$ is null-homotopic over $M^{(1)}$.
Since $\pi_2(\mathrm{SO}(5))=0$, it is also null-homotopic over $M^{(2)}$ and so is the map $g=\pi\circ h:M^3\to \mathrm{SO}(5)\slash \mathrm{U}(2)$.
Since $g$ is null-homotopic over $M^{(2)}$ and $\pi_3(\mathrm{SO}(5)\slash \mathrm{U}(2))=0$, it is contractible.
Therefore if $c_1(\xi)=0$, then there exits an embedding $f\colon M^3\to \Real^5$ such that 
the induced map $g\colon M^3\to \mathrm{SO}(5)\slash \mathrm{U}(2)$ is contractible. This completes the proof of Theorem~\ref{embedding}.
\end{proof}

\section*{Acknowledgements}
I am grateful to Atsuhide Mori and my advisor Professor Takashi Tsuboi. 
Atsuhide Mori motivated me to study the problem of contact embeddings. 
Takashi Tusboi encouraged me constantly and also gave me helpful advice.

\end{document}